\documentclass[12pt,reqno,a4paper]{amsart}
\usepackage{amsmath,amsfonts,amsthm,amssymb,amsxtra,enumerate,mathtools,mathabx}
\usepackage[a4paper, top=3cm, bottom=3cm, left=2.1cm, right=2.1cm]{geometry}
\usepackage[normalem]{ulem}
\usepackage[utf8]{inputenc}
\usepackage[T1]{fontenc}
\usepackage{lmodern}
\usepackage{bbm}
\usepackage{mathrsfs}
\usepackage{enumerate}
\usepackage{comment}
\usepackage{placeins}
\usepackage{pgfplots}
\usepackage{mathabx}
\pgfplotsset{compat=newest}
\usepgfplotslibrary{fillbetween}
\usepackage{tikz}
\usepackage{csquotes}
\usepackage{float}






\newtheorem{theorem}{Theorem}

\newtheorem{lemma}[theorem]{Lemma}
\newtheorem{corollary}[theorem]{Corollary}
\theoremstyle{definition}

\theoremstyle{remark}


\numberwithin{equation}{section}


\newcommand{\A}{\mathbf{A}}

\newcommand{\dd}{\, \mathrm{d}}
\newcommand{\eps}{\varepsilon}
\renewcommand{\epsilon}{\varepsilon}

\newcommand{\N}{\mathbb{N}}
\newcommand{\norm}[2][]{{\left\|#2\right\|}} 
\renewcommand{\phi}{\varphi}
\newcommand{\R}{\mathbb{R}}

\renewcommand{\epsilon}{\varepsilon}

\DeclareMathOperator{\Tr}{Tr}

\newcommand{\abs}[2][]{{\left\vert#2\right\vert}}

\makeatletter
\newcommand*\bigcdot{\mathpalette\bigcdot@{.5}}
\newcommand*\bigcdot@[2]{\mathbin{\vcenter{\hbox{\scalebox{#2}{$\m@th#1\bullet$}}}}}
\makeatother

\DeclareMathOperator{\PR}{B}


\raggedbottom
\usepackage{hyperref}
\begin{document}
	
	\title[Concentration of eigenfunctions on singular Riemannian manifolds
	]{Concentration of eigenfunctions on singular Riemannian manifolds}

	\author[Charlotte Dietze]{Charlotte Dietze}
	\author[Larry Read]{Larry Read}
	\address[Charlotte Dietze]{Mathematisches Institut, Ludwig-Maximilans Universit\"at M\"unchen, Theresienstr. 39, 80333 M\"unchen, Germany;\newline
 Institut des Hautes Etudes Scientifiques, 35 Route de Chartres, Bures-sur-Yvette, France}
	\email{dietze@math.lmu.de}
	\address[Larry Read]{Mathematisches Institut, Ludwig-Maximilans Universit\"at M\"unchen, Theresienstr. 39, 80333 M\"unchen, Germany}
	\email{read@math.lmu.de}
	\subjclass[2010]{Primary: 58C40; Secondary: 53C17, 35P20}
\keywords{Grushin, sub-Riemannian geometry, Weyl's law, singular Riemannian metric, gas planets}

\begin{abstract}
		We consider a compact Riemannian manifold with boundary and a metric that is singular at the boundary. The associated Laplace--Beltrami operator is of the form of a Grushin operator plus a singular potential. In a supercritical parameter regime, we identify the rate of concentration and profile of the high--frequency eigenfunctions that accumulate at the boundary. We give an application to acoustic modes on gas planets.
	\end{abstract}
 
\maketitle

\section{Introduction}
 The study of singular Riemannian spaces has attracted great attention so far, see for example \cite{Che}. In the series of works \cite{CHT1, CHT2, de_verdiere_sub_2022} Colin de Verdi\`ere, Hillairet and Tr\'elat considered small-time asymptotics of sub--Riemannian heat kernels and derived a Weyl law for a large class of models. 

  \medskip

 Let $X$ be an $n+1$ dimensional, smooth and compact manifold with boundary. We can identify the part of $X$ near boundary with $[0,1)\times M$, where $M$ is an $n$--dimensional Riemannian manifold and $\{0\}\times M$ is identified with $\partial X$. We consider on $X$ a singular Riemannian metric $g$ such that near $\partial X$,
\begin{equation}\label{eq:Grushin_metric}
     g=dx^2+x^{-\beta}g_1(x), 
 \end{equation}
 where $\beta>0$ and $x$ is a transverse coordinate to the boundary  and $g_1(x)$ is a family of smooth Riemannian metrics on $M$ depending continuously on the variable $x$. 

\medskip

We are interested in the corresponding Laplace--Beltrami operator, which locally near the boundary reads
	\begin{equation*}
		-\partial_x^2+\frac{C_\beta}{x^2}+x^\beta \Delta_{M}\ \text{with } C_\beta=\frac{\beta n}{4}\left(1+\frac{\beta n}{4}\right) ,
	\end{equation*}
see \eqref{eq:lbop} below. Note that for $\beta=2$ this corresponds to the Grushin operator
\begin{equation*}
    X^2+\sum_{i=1}^n Y_i^2
\end{equation*}
generated by the vector fields $X=\partial_x$, $Y_i=x\partial_{y_i}$ for $i=1,\ldots, n$ plus a potential $\frac{C_\beta}{x^2}$, see \cite[p.~1746]{BL}.

 \medskip

In \cite{de_verdiere_weyl_2024} the authors studied the counting function for the Laplace--Beltrami operator $\Delta_X$ on $(X,g)$, given by  
	\begin{align*}
		N(\lambda)=\#\{k\in \N\colon \lambda_k(\Delta_X)\leq\lambda\}.
	\end{align*}
 Here we consider the Laplace--Beltrami operator on $(X,g)$ as a Friedrichs extension of the quadratic form defined on smooth compactly supported functions. They found Weyl asymptotics in the limit $\lambda\rightarrow \infty$ for the supercritical case $\beta>2/n$, in the form of
	\begin{align}
		N(\lambda)&=C_{\alpha,n}v_{G}(M)\lambda^{d/2}(1+o(1)),\label{eqn:VDdHTresult}
	\end{align}
 where $d\coloneqq n\left(1+\beta/2\right)$ is the Hausdorff dimension of $(X,g)$, $G=g_1(0)$ and $C_{a,n}$ is a positive constant depending only on $\alpha$ and $n$. Bounds were previously found under slightly different assumptions in \cite{chitour}. 
 
 \medskip

Both \cite{de_verdiere_weyl_2024} and \cite{chitour} establish asymptotics for the critical case $\beta=2/n$ and the subcritical case $\beta<2/n$. Earlier, \cite{Boscain} examined the critical case with $n=1$, $\beta=2$ in an explicit model, deriving Weyl asymptotics through direct computation. Notably, the leading-order Weyl asymptotics for $\beta<2/n$ match those for a smooth non-degenerate Riemannian metric near the boundary, while the critical case $\beta=2/n$ includes an additional logarithmic factor.

    \medskip 
    
 	In \cite{de_verdiere_weyl_2024}, the authors also show that the high--frequency modes concentrate near the boundary of $X$. More precisely, define for $q\in X$
	\begin{align}\label{eqn:spectralfnc}
		\rho_\lambda(q)\coloneqq \frac{1}{N(\lambda)}\sum_{k=1}^{N(\lambda)}\abs{\phi_k(q)}^2,
	\end{align}
	where $\phi_k$ are the eigenfunctions of $\Delta_X$ that are normalised in $L^2$ with respect to the corresponding Riemannian volume measure $dv_g$. It was confirmed in \cite{de_verdiere_weyl_2024} that for $\beta\ge2/n$ this function converges to a delta at the boundary in the distributional sense, namely
	\begin{align*} 
		\lim_{\lambda\to\infty} \int_X \rho_\lambda f\dd v_g=\int_X f\frac{\delta_{\{u=0\}}\otimes \dd v_G}{v_G(M)},
	\end{align*}
    holds for any $f\in C(X)$. 

    \begin{theorem}\label{thm:main_grushin}
		Let $\rho_\lambda$ be given by \eqref{eqn:spectralfnc}, $\beta>2/n$, then there exists a function $\PR\in L^1(0,\infty)$ depending on $\beta$ with $\norm{\PR}_1=1$ that is defined in \eqref{eq:Bdef} below such that
		\begin{align*}
			\lim_{\lambda\rightarrow\infty} \int_{[0,1)\times M}\rho_\lambda(x,y) V(\sqrt{\lambda}x,y)\dd v_{g}=\int_{[0,\infty)\times M} \PR (x)V(x,y)\dd x\frac{\dd v_G(y)}{v_G(M)}
		\end{align*}
		holds for any bounded $V\in C([0,\infty)\times M)$.
	\end{theorem}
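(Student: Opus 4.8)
The plan is to reduce the problem to a one‑dimensional spectral analysis on the transverse variable $x$, exploiting the product structure $-\partial_x^2 + C_\beta x^{-2} + x^\beta\Delta_M$ near the boundary. First I would separate variables using an $L^2(M, dv_{g_1(x)})$ eigenbasis; since the concentration happens in a shrinking neighbourhood $x \sim \lambda^{-1/2}$, I would argue that $g_1(x)$ can be replaced by $G=g_1(0)$ up to negligible errors, and that the contribution of the region $x \gtrsim \lambda^{-1/2+\delta}$ (or away from the boundary) is $o(N(\lambda))$ by the concentration result quoted from \cite{de_verdiere_weyl_2024}. This turns the sum defining $\rho_\lambda$ into a sum over pairs $(j,k)$ where $j$ indexes an eigenvalue $\mu_j$ of $\Delta_M$ (with $\mu_j \sim$ Weyl density on $M$) and $k$ indexes an eigenvalue of the one‑dimensional operator $L_\mu := -\partial_x^2 + C_\beta x^{-2} + \mu x^\beta$ on $(0,\infty)$, a Schrödinger operator with an inverse‑square barrier and a confining term $\mu x^\beta$ with $\beta>0$.

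Next I would perform the natural rescaling. For $L_\mu$ the substitution $x = \mu^{-1/(\beta+2)} t$ gives $L_\mu = \mu^{2/(\beta+2)}(-\partial_t^2 + C_\beta t^{-2} + t^\beta)$, so its eigenvalues are $\mu^{2/(\beta+2)} e_m$ with $e_m$ the eigenvalues of the fixed operator $\ell := -\partial_t^2 + C_\beta t^{-2} + t^\beta$ and eigenfunctions $\psi_m(t)$ (normalised so that after undoing the scaling the $\phi_k$ are $L^2(dv_g)$‑normalised; one must track the Jacobian $x^{-\beta n/2}$ coming from $dv_g = x^{-\beta n/2}\,dx\,dv_{g_1}$). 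Then $|\phi_k(x,y)|^2$ at $x = u/\sqrt\lambda$ is, up to constants, $|\psi_m(\mu_j^{1/(\beta+2)} u/\sqrt\lambda)|^2 \mu_j^{\text{power}} |Y_j(y)|^2$. The constraint $\lambda_k \le \lambda$ becomes $\mu_j^{2/(\beta+2)} e_m \le \lambda$, i.e. $\mu_j \le (\lambda/e_m)^{(\beta+2)/2}$, and the relevant scaling variable is $s := \mu_j^{1/(\beta+2)}/\sqrt\lambda \in (0, e_m^{-1/2}]$. Summing over $j$ via Weyl's law for $\Delta_M$ on $M$ (which replaces $\sum_j |Y_j(y)|^2 (\cdots)$ by an integral against $dv_G(y)$ by the quantum ergodicity‑type averaging already implicit in \eqref{eqn:VDdHTresult}) converts the $j$‑sum into an integral $\int_0^{e_m^{-1/2}} s^{n-1}\,ds$ weighted appropriately, and after collecting the $m$‑sum one arrives at an expression of the form $\int_0^\infty \PR(x) V(x,y)\,dx$, where $\PR$ is built out of $\sum_m \int |\psi_m(sx)|^2 s^{?}\,ds$ — i.e. the quoted formula \eqref{eq:Bdef}. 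The normalisation $\|\PR\|_1 = 1$ should come out automatically, or can be checked by taking $V\equiv 1$ and comparing with the Weyl law \eqref{eqn:VDdHTresult}.

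The main obstacle I expect is making the separation‑of‑variables reduction rigorous in the presence of the $x$‑dependence of $g_1(x)$ and the fact that $X$ is only a manifold (the product structure holds just near $\partial X$): one needs a quantitative comparison argument (e.g. bracketing the quadratic form of $\Delta_X$ between two product operators with metrics $(1\pm\epsilon)G$ on a collar $x < \lambda^{-1/2+\delta}$, plus Dirichlet/Neumann decoupling at $x = \lambda^{-1/2+\delta}$, plus a trace/counting estimate showing the decoupling error and the exterior region contribute $o(N(\lambda))$), and one must control the tails of $\psi_m$ and the large‑$m$ behaviour of $e_m$ (Weyl asymptotics for the one‑dimensional operator $\ell$, which has discrete spectrum since $t^\beta\to\infty$) to justify interchanging the limit $\lambda\to\infty$ with the sum over $m$ — this is where the hypothesis $\beta > 2/n$ (supercriticality) is used to guarantee the double sum converges and the boundary concentration is genuine. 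A secondary technical point is passing from the pointwise eigenfunction sum on $M$ to the integral against $dv_G$; since the statement only asserts a weak limit tested against $V\in C([0,\infty)\times M)$, it suffices to use the distributional convergence of $\frac1{N}\sum |Y_j|^2 \to v_G(M)^{-1}$ type on $M$ (local Weyl law on the compact manifold $M$), which is standard and may already be available in the cited works.
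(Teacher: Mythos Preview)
Your outline captures the correct heuristics (the rescaling of $L_\mu$, the role of Weyl's law on $M$, the supercriticality condition), but there is a structural gap in the reduction step. Dirichlet--Neumann bracketing and quadratic--form comparison of $\Delta_X$ with a product operator give you monotonicity for \emph{eigenvalue counts} and for \emph{traces of monotone functions of the operator}, not for the spectral density $\sum_k |\phi_k(q)|^2$. When you replace $g_1(x)$ by $G$ or decouple at $x=\lambda^{-1/2+\delta}$, the eigenfunctions of the bracketing operators are genuinely different from those of $\Delta_X$; knowing that their counting functions are close to leading order tells you nothing a priori about the closeness of the densities. So the sentence ``argue that $g_1(x)$ can be replaced by $G$ up to negligible errors'' hides exactly the problem: negligible for $N(\lambda)$, yes; negligible for $\rho_\lambda$ tested against $V(\sqrt\lambda\,x,y)$, not obviously.

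The paper circumvents this by never comparing eigenfunctions. It inserts the test function into the operator: set $V_\lambda(x,y)=\lambda V(\sqrt\lambda\,x,y)$ and observe, with $\Gamma_\lambda=(\Delta_X-\lambda)_-^0$, that
\[
\int_X \widetilde\rho_\lambda V_\lambda\,dv_g=\Tr\bigl[(\Delta_X+V_\lambda-\lambda)\Gamma_\lambda\bigr]-\Tr\bigl[(\Delta_X-\lambda)\Gamma_\lambda\bigr]\ge \Tr(\Delta_X-\lambda)_- - \Tr(\Delta_X+V_\lambda-\lambda)_-,
\]
a Hellmann--Feynman/variational inequality. Both traces on the right are now amenable to all the bracketing and separation--of--variables machinery you describe (this is the content of the paper's Lemma on trace asymptotics). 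One then replaces $V$ by $\varepsilon V$, divides by $\varepsilon$, and sends $\varepsilon\to 0$ using first--order perturbation theory and dominated convergence; repeating with $-V$ gives the matching upper bound. Your rescaling computation for $L_\mu$ and your integrability check via $\beta>2/n$ reappear inside that trace lemma, so most of your analysis is salvageable once it is relocated to the perturbed operator $\Delta_X+V_\lambda$ rather than applied directly to $\rho_\lambda$. The local Weyl law $\tfrac1N\sum|Y_j|^2\to v_G(M)^{-1}$ is then unnecessary: the $y$--dependence enters only through $V$, and for continuous $V$ a further Dirichlet--Neumann bracketing on $M$ into small pieces where $V$ is nearly constant suffices.
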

 
	Intrinsic to the result is the fact that the high--frequency states are completely localised to the boundary in the limit. Namely, for any $\epsilon>0$, there exists $L>0$ large enough such that 
    \begin{align*}
\liminf_{\lambda\rightarrow\infty}\int_{[0,L\lambda^{-1/2}]\times M}\rho_\lambda \dd v_g\ge1-\epsilon. 
    \end{align*}   
    The theorem implies that the leading-order rate of concentration towards the boundary is $\lambda^{-1/2}$, whilst the states remain uniformly distributed in $M$. 

\medskip

The paper is structured as follows: in Section \ref{sec:traceasympt} we derive asymptotics for the trace of the operator with a rescaled potential. In Section \ref{sec:pfthm1} we then use a rescaled Hellmann--Feynman type argument together with the result from Section \ref{sec:traceasympt} to prove Theorem \ref{thm:main_grushin}. Finally in Section \ref{sec:application} we show how the result is related to the concentration of acoustic modes on gas planets.
	
\section{Trace asymptotics}\label{sec:traceasympt}

 We use the notation $X_{\tilde\varepsilon}=[0,\tilde\varepsilon)\times M$. In order to find the convergence of the density, we follow an argument of adding a rescaled potential to our operator. Here and in the following, we take $V_\lambda$ as a local rescaling of $V\in C([0,\infty)\times M)$ given by
    \begin{equation}\label{eqn:Vlambda}
        V_\lambda(q)=\begin{cases}
            \lambda V(\sqrt{\lambda}x,y), &q=(x,y)\in X_1\\ 
            0, &q\in X\backslash X_1.
        \end{cases}
    \end{equation}
    In this section, we are concerned with the value of
    \begin{align}
		\label{eqn:liminfsup}\lim_{\lambda\rightarrow \infty}\begin{pmatrix}
			\inf\\ \sup
		\end{pmatrix}\lambda^{-1-\frac{d}{2}}\Tr(\Delta_X+V_\lambda-\lambda)_{-},
	\end{align}
	where we use $a_{-}=(\abs{a}-a)/2$. 

\medskip

 	We start by reducing the trace  \eqref{eqn:liminfsup} to just that of the operator on $X_\varepsilon$, for $\varepsilon<<1$ where $\Delta_{g_\varepsilon}$ has a nice form. Since the family of metrics $g_1(x)$ is continuous in $x$, for any $\delta>0$ we can find $\varepsilon>0$ and a smooth metric $g_\varepsilon$ on $X$ such that
  \begin{equation*}
     g_\varepsilon=dx^2+x^{-\beta}g_1(x=0) \ \text{on} \ X_{3\varepsilon}
 \end{equation*}
and
	\begin{equation}\label{eq:gcomp}
	    (1+\delta)^{-1}g\leq g_{\varepsilon}\leq (1+\delta)g
	\end{equation}
	holds on all of $X$.
By \eqref{eq:gcomp}, we have that 
	\begin{align*}
		(1+c_\delta)^{-1}\Delta_g\leq \Delta_{g_\varepsilon}\leq (1+c_\delta)\Delta_{g}
	\end{align*}
	in the quadratic form sense, for some $c_\delta>0$ with $c_{\delta}\rightarrow 0$ as $\delta\rightarrow 0$. Applying this inequality in estimating \eqref{eqn:liminfsup} we may without loss of generality assume that $c_\delta=0$.  

 \medskip
	
Next, we carry out a change of functions as in \cite{de_verdiere_weyl_2024}, namely, we replace any function $f\in C_c^\infty (X)$ by a smooth function that is $x^{\beta n/4} f$ on $ X_{2\varepsilon}$ and agrees with $f$ on $X\setminus  X_{3\varepsilon}$. We also perform a change of measure to make this transformation unitary and note that on $X_{2\varepsilon}$ the Riemannian volume measure $dv_g$ is now given by the Lebesgue measure on $[0,2\epsilon]$ times the corresponding Riemannian volume $dv_G$ measure on $M$, where $G=g_1(x=0)$, so in local coordinates on $X_{2\varepsilon}$, we have $dv_g=dxdv_G$.
On $X_{2\varepsilon}$ the operator $\Delta_{g_\varepsilon}$ can be written as
	\begin{equation}\label{eq:lbop}
		-\partial_x^2+\frac{C_\beta}{x^2}+x^\beta \Delta_{M}\ \text{with } C_\beta=\frac{\beta n}{4}\left(1+\frac{\beta n}{4}\right) . 
	\end{equation}

We are now ready to introduce the trace asymptotics for the Laplace--Beltrami operator. Our method relies on reducing our problem to the case of a simpler one-dimensional operator that was also used in \cite{de_verdiere_weyl_2024}.
 
	\begin{lemma}\label{lem:trace}
		Let $V$ be bounded with $V\in C(X_\infty)$, then with $V_\lambda$ defined in \eqref{eqn:Vlambda}, it follows that  
		\begin{align}\label{eq:lecd}
			\lim_{\lambda\rightarrow\infty}\lambda^{-1-\frac{d}{2}}\Tr\left(\Delta_X+V_\lambda-\lambda\right)_{-}=L^{\mathrm{cl}}_{0,n}\int_M \int_{0}^\infty \Tr_{\R_+}(P_{s^{2/n}}+V-1)_{-}\dd s\dd v_G,
		\end{align}
		where $L^{\mathrm{cl}}_{0,n}\coloneqq (4\pi)^{-n/2}\Gamma(1+n/2)^{-1}$ is the semiclassical constant and $P_\mu$ for $\mu>0$ is defined below in \eqref{eqn:Pmu}.
	\end{lemma}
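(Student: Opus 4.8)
\emph{Reduction to a collar of the boundary.} The plan is to localise the trace to $X_\varepsilon=[0,\varepsilon)\times M$, where the operator has the product form \eqref{eq:lbop}, and then to run Weyl's law on $M$ together with the rescaling $u=\sqrt\lambda x$. First I would bracket $\Delta_{g_\varepsilon}$ by imposing Dirichlet, respectively Neumann, conditions on $\{\varepsilon\}\times M$. This splits the trace into a contribution from $X_\varepsilon$ — where, after the unitary change of functions already performed, the operator is $-\partial_x^2+C_\beta/x^2+x^\beta\Delta_M$ with Friedrichs condition at $x=0$ and $\dd v_g=\dd x\,\dd v_G$ — and a contribution from the non-degenerate region $X\setminus X_\varepsilon$. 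On the latter the operator is uniformly elliptic, so using $|V_\lambda|\le\lambda\|V\|_\infty$ and the ordinary Weyl law this trace is $O(\lambda^{1+(n+1)/2})$; since $1+(n+1)/2<1+\frac d2$ precisely when $\beta>2/n$, it is $o(\lambda^{1+d/2})$ and is discarded.

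\emph{Freeze, separate, rescale, count.} On the collar the term $x^\beta\Delta_M$ separates the $x$- and $y$-variables, but $V_\lambda(x,y)=\lambda V(\sqrt\lambda x,y)$ does not. I would therefore (i) replace $V$ by the truncation $V\chi(\sqrt\lambda x)$ with $\chi$ supported in $[0,R]$; (ii) partition $M$ into small geodesic cells $U_j$ and bracket $\Delta_M$ by $\bigoplus_j\Delta_{U_j}^{D/N}$; (iii) on each cell replace $V(w,y)$ by $V(w,y_j)$, with error controlled by the modulus of continuity of $V$ on the compact set $[0,R]\times M$. After these reductions the operator on $U_j\times[0,\varepsilon)$ is $\bigoplus_k\big(-\partial_x^2+C_\beta/x^2+\mu_k^{(j)}x^\beta+\lambda V(\sqrt\lambda x,y_j)\big)$, the sum running over the eigenvalues $\mu_k^{(j)}$ of $\Delta_{U_j}^{D/N}$. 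Under $u=\sqrt\lambda x$ the $k$-th summand shifted by $-\lambda$ becomes $\lambda\big(P_{\mu_k^{(j)}\lambda^{-1-\beta/2}}+V(\cdot,y_j)-1\big)$ on $[0,\sqrt\lambda\varepsilon)$, whose negative trace converges, as the interval exhausts $\R_+$, to $\Tr_{\R_+}(P_{\mu_k^{(j)}\lambda^{-1-\beta/2}}+V(\cdot,y_j)-1)_-$. Setting $g_j(\mu)=\Tr_{\R_+}(P_\mu+V(\cdot,y_j)-1)_-$, I then pass from $\sum_k g_j(\mu_k^{(j)}\lambda^{-1-\beta/2})$ to $L^{\mathrm{cl}}_{0,n}v_G(U_j)\lambda^{d/2}\cdot\frac n2\int_0^\infty g_j(\mu)\mu^{n/2-1}\dd\mu$ by the Weyl law $N_{U_j}(\Lambda)=L^{\mathrm{cl}}_{0,n}v_G(U_j)\Lambda^{n/2}(1+o(1))$ (a Riemann--Stieltjes computation), and the substitution $\mu=s^{2/n}$ rewrites $\frac n2\int_0^\infty g_j(\mu)\mu^{n/2-1}\dd\mu=\int_0^\infty\Tr_{\R_+}(P_{s^{2/n}}+V(\cdot,y_j)-1)_-\dd s$, with exactly the constant $L^{\mathrm{cl}}_{0,n}$. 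Summing over $j$ gives a Riemann sum for $\int_M\int_0^\infty\Tr_{\R_+}(P_{s^{2/n}}+V-1)_-\dd s\,\dd v_G$; refining the partition, sending $R\to\infty$, and squeezing between the Dirichlet and Neumann sides then yield \eqref{eq:lecd}. (The finitely many lowest modes per cell — in particular the zero mode on a Neumann cell, for which $P_0$ has only continuous spectrum — contribute $O(\lambda^{3/2})=o(\lambda^{1+d/2})$ and are harmless.)

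\emph{The main estimate and the role of the supercriticality.} The analytic heart, which controls every limit interchange above, is a uniform bound on $\Tr_{\R_+}(P_\mu+W-1)_-$ for $\|W\|_\infty\le\|V\|_\infty$. Using the dilation $u\mapsto\mu^{-1/(\beta+2)}u$ one has $P_\mu=\mu^{2/(\beta+2)}P_1$, hence $P_\mu+W-1\ge0$ once $\mu\gtrsim1$, while for small $\mu$, $\Tr_{\R_+}(P_\mu+W-1)_-\lesssim\#\{k:\lambda_k(P_1)\lesssim\mu^{-2/(\beta+2)}\}\lesssim\mu^{-1/\beta}$ by the elementary one-dimensional Weyl bound $\#\{k:\lambda_k(P_1)\le E\}\lesssim E^{1/\beta+1/2}$. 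Consequently $s\mapsto\Tr_{\R_+}(P_{s^{2/n}}+W-1)_-$ is dominated by $Cs^{-2/(n\beta)}$, which is integrable near $s=0$ precisely because $\beta>2/n$; this shows the right-hand side of \eqref{eq:lecd} is finite and, through dominated convergence, legitimises the truncation in $x$, the freezing in $y$, the passage $[0,\sqrt\lambda\varepsilon)\to\R_+$, and the conversion of the eigenvalue sums into integrals. I expect this uniform integrable bound — together with making the bracketing and Riemann-sum errors vanish uniformly in $\lambda$ — to be the main obstacle; the remainder is bookkeeping. (Since $\beta>2/n$ gives $\beta n/4>1/2$, the operator $P_\mu$ is limit point at $0$, so its Friedrichs realisation is the unique self-adjoint one and no ambiguity arises.) A more streamlined alternative is to regard $\lambda^{-1}(\Delta_X+V_\lambda-\lambda)$, after rescaling, as a semiclassical pseudodifferential operator on $M$ with operator-valued symbol $\mathbf a(y,\xi)=P_{|\xi|_M^2}+V(\cdot,y)-1$ on $L^2(\R_+)$ and parameter $h=\lambda^{-(2+\beta)/4}$, and to invoke $\Tr(\mathbf a^{\mathrm w})_-\sim(2\pi h)^{-n}\int_{T^*M}\Tr_{\R_+}(\mathbf a(y,\xi))_-\,\dd\xi\,\dd v_G(y)$; carrying out the $\xi$-integral reproduces the $s$-integral and the constant $L^{\mathrm{cl}}_{0,n}$.
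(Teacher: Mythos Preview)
Your proposal is correct and follows essentially the same route as the paper: localise to the collar by Dirichlet--Neumann bracketing at $x=\varepsilon$, discard the interior by the ordinary Weyl law (using $n+1<d$), separate variables, rescale by $u=\sqrt\lambda x$ to reduce to $P_{\mu\lambda^{-1-\beta/2}}+V-1$ on a growing interval, convert the eigenvalue sum over $\Delta_M$ into an $s$-integral via Weyl's law on $M$, and justify everything by the integrable majorant $s^{-2/(n\beta)}$ coming from the scaling $P_\mu\cong\mu^{2/(2+\beta)}P_1$ and the one-dimensional count $\#\{\lambda_k(P_1)\le E\}\lesssim E^{1/2+1/\beta}$. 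The only cosmetic difference is that the paper first reduces to $V$ independent of $y$ and then uses the eigenvalues of $\Delta_M$ on all of $M$, relegating the cell-partition of $M$ to a one-line remark, whereas you carry the partition and the freezing of $y$ through explicitly; your semiclassical operator-valued-symbol alternative is not used in the paper.
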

	\begin{proof}
 
After having performed the change of coordinate, functions and measure as described above, we apply Dirichlet--Neumann bracketing and impose Dirichlet or Neumann conditions along $x=\varepsilon$ so that
	\begin{align}\label{eq:dnbracketing}
		\Delta_{g_{\varepsilon}}\vert^N_{X_\varepsilon}\oplus \Delta_{g_{\varepsilon}}\vert^N_{X\backslash X_\varepsilon}\leq \Delta_{g_\varepsilon}\leq \Delta_{g_{\varepsilon}}\vert^D_{X_\varepsilon}\oplus \Delta_{g_{\varepsilon}}\vert^D_{X\backslash X_\varepsilon}.
	\end{align}
The parts corresponding to $X\setminus X_\epsilon$ are of subleading order by the Weyl law on smooth Riemannian manifolds with boundary, see e.g.~\cite{BGM, Dav},  and  $n+1<d$:
\begin{equation}\label{eq:subleadingint}
    \lim_{\lambda\rightarrow\infty}\lambda^{-1-\frac{d}{2}}\Tr\left(-\Delta_{g_\varepsilon}\vert^{D/N}_{X\setminus X_\varepsilon}+V_\lambda-\lambda\right)_{-}=0.
\end{equation}
 
For the parts corresponding to $X_\epsilon=[0,\epsilon)\times M$, we write the function $V_\lambda$ in local coordinates $V_\lambda(x,y)=\lambda V(x\sqrt{\lambda},y)$. In the following, we will only treat the case where $V$ is independent of $y$, but we allow for $M$ to have a boundary with Dirichlet or Neumann boundary conditions on $[0,\epsilon)\times \partial M$. The general case can then be deduced from this case, using the continuity of $V$, by decomposing $M$ into smaller subdomains and choosing $V$ constant there and Dirichlet--Neumann bracketing.

\medskip

Let us define
\begin{equation}\label{eqn:Pmu}
    P_{\mu}\coloneqq -\partial_x^2+\frac{C_\beta}{x^2}+\mu x^\beta
\end{equation}
which we always consider with Dirichlet conditions at zero. Denote the eigenvalues of $\Delta_M$ by $(\mu_j)_{j=0}^\infty$, then by using separation of variables, the operators decompose in the eigenbasis of $\Delta_M$ and we have
	\begin{align*}
		\Tr\left(-\Delta_{g_\varepsilon}\vert^{D/N}_{ X_\varepsilon}+V_\lambda-\lambda\right)_{-}&= \sum_{j=0}^\infty\Tr_{[0,\epsilon)}^{D/N}\left( P_{\mu_j}+V_\lambda(x)-\lambda\right)_{-},
	\end{align*}
where we note that the terms are non-zero for only finitely many $j$. 

For any fixed $j$ each of the terms in the sum above has sub-leading contribution. Moreover, from the standard form of Weyl's law the eigenvalues of $\Delta_M$ satisfy
	\begin{equation*}
		\mu_j=c_{M}j^{2/n}(1+o(1)) \text{ as }j\rightarrow \infty,
	\end{equation*}
	with $c_{M}=(L^{\mathrm{cl}}_{0,n}v_G(M))^{-2/n}$. Namely, for any $\delta>0$ we can find $K\ge2$ sufficiently large so that 
	\begin{equation*}
		(1+\delta)^{-1}c_{M} j^{2/n}\leq \mu_j\leq (1+\delta)c_{M} j^{2/n}
	\end{equation*}
	for all $j\geq K$. In the trace estimate \eqref{eqn:liminfsup} we can henceforth assume that the eigenvalues $\mu_j=c_M j^{n/2}$ by removing $K$ terms in the sum and taking $\delta$ small. 

 \medskip

 Hence,
 \begin{equation}\label{eqn:decmptrlim}
    \lim_{\lambda\rightarrow\infty}\lambda^{-1-\frac{d}{2}}\Tr\left(-\Delta_{g_\varepsilon}\vert^{D/N}_{ X_\varepsilon}+V_\lambda-\lambda\right)_{-}=\lim_{\lambda\rightarrow\infty}\lambda^{-1-\frac{d}{2}} \sum_{j=K}^\infty\Tr_{[0,\epsilon]}^{D/N}\left( P_{c_Mj^{2/n}}+V_\lambda(x)-\lambda\right)_{-}.
\end{equation}

First note that the minimum of the function $x\mapsto \frac{C_\beta}{x^2}+\mu x^\beta$ is attained at a value that is proportional to $\mu^{2/(2+\beta)}$, so there exists a constant $C>0$ such that  for all $j\ge C\lambda^{d/2}$, $\frac{C_\beta}{x^2}+c_Mj^{2/n} x^\beta>\lambda$, and therefore all terms for $j\ge C\lambda^{d/2}$ in \eqref{eqn:decmptrlim} are zero. 

\medskip

Furthermore, note that for any $L>0$ the part coming from $j\le L\lambda^{n/2}$ is also of subleading order, namely due to Weyl's law
\begin{equation*}
    \Tr_{[0,\epsilon]}^{D/N}\left( P_{c_Mj^{2/n}}+V_\lambda(x)-\lambda\right)_{-}\lesssim \lambda(1+\epsilon\sqrt{\lambda}),
\end{equation*}
therefore 
\begin{equation*}
    \sum_{j=K}^{L\lambda^{n/2}}\Tr_{[0,\epsilon]}^{D/N}\left( P_{c_Mj^{2/n}}+V_\lambda(x)-\lambda\right)_{-}\lesssim \lambda^{n/2}\lambda(1+\epsilon\sqrt{\lambda})\ll \lambda^{1+\frac{d}{2}}.
\end{equation*}
Thus, 
 \begin{equation}\label{eqn:finsum}
    \begin{split}&\lim_{\lambda\rightarrow\infty}\lambda^{-1-\frac{d}{2}}\Tr\left(-\Delta_{g_\varepsilon}\vert^{D/N}_{ X_\varepsilon}+V_\lambda-\lambda\right)_{-}\\
    &=\lim_{\lambda\rightarrow\infty}\lambda^{-1-\frac{d}{2}} \sum_{j=L\lambda^{n/2}}^{C\lambda^{d/2}}\Tr_{[0,\epsilon]}^{D/N}\left( P_{c_Mj^{2/n}}+V_\lambda(x)-\lambda\right)_{-}.
    \end{split}
\end{equation}

In \cite[Proposition 5]{de_verdiere_weyl_2024} the authors showed that for any $\mu>0$ and $a>0$, the operator $P_\mu$ on $[0,a]$ is  unitarily equivalent to $\mu^{\frac{2}{2+\beta}}P_1$ on $[0,\mu^{1/(2+(\beta)}a]$ via the unitary transformation 
 \begin{equation}\label{eqn:unitranP}
    (Uf)(x)\mapsto \mu^{\frac{1}{2(2+\beta)}}f\left(\mu^{\frac{1}{2+\beta}}x\right).
 \end{equation}
 More generally, including $V_\lambda(x)=\lambda V(\sqrt{\lambda}x)$, we have that for any $\mu>0$, $\lambda>0$ and $a>0$, the operator $P_{\mu}+V_\lambda$ on $[0,a]$ is  unitarily equivalent to $\lambda\left(P_{\mu \lambda^{-1-\beta/2}}+V\right)$ on $[0,a\sqrt\lambda]$. To see this, we consider for any $\phi\in C^\infty_c((0,\epsilon])$ the unitary transformation  $\phi_\lambda(x)=\lambda^{1/4}\phi(x\sqrt{\lambda})$, then 
		\begin{align*}
			\langle (P_\mu+V_\lambda-\lambda)\phi_\lambda,\phi_\lambda\rangle_{L^2[0,\varepsilon]}&=\int_0^\epsilon \abs{\partial_x\phi_\lambda}^2+x^{-2}\abs{\phi_\lambda}^2+(\mu x^\beta+V_\lambda (x)-\lambda)\abs{\phi_\lambda}^2\dd x\\
			&=\lambda \int_0^{\epsilon\sqrt{\lambda}} \abs{\partial_x\phi}^2+x^{-2}\abs{\phi}^2+(\mu\lambda^{-1-\beta/2} x^\beta+ V(x)-1)\abs{\phi}^2\dd x\\
			&=\lambda \langle (P_{\mu \lambda^{-1-\beta/2}}+V-1)\phi,\phi\rangle_{L^2[0,\varepsilon\sqrt{\lambda}]}.
		\end{align*}
  
    It follows that \eqref{eqn:finsum} changes to 
    \begin{equation*}
    \lim_{\lambda\rightarrow\infty}\lambda^{-1-\frac{d}{2}}\Tr\left(-\Delta_{g_\varepsilon}\vert^{D/N}_{ X_\varepsilon}+V_\lambda-\lambda\right)_{-}=\lim_{\lambda\rightarrow\infty}\lambda^{-\frac{d}{2}} \sum_{j=L\lambda^{n/2}}^{C\lambda^{d/2}}\Tr_{[0,\epsilon\sqrt{\lambda}]}^{D/N}\left( P_{c_M j^{2/n} \lambda^{-1-\beta/2}}+V(x)-1\right)_{-}.
    \end{equation*}
  Now using the fact that each term is non-increasing in $j$ we can estimate the sum from above and below by an integral that, up to a slight change of the limits of integration, is given by
  \begin{equation}\label{eqn:lem4rieint}
    \begin{split}
      &\int_{L\lambda^{n/2}}^{C{\lambda^{d/2}}} \Tr_{[0,\epsilon\sqrt{\lambda}]}^{D/N}\left( P_{c_M s^{2/n} \lambda^{-1-\beta/2}}+V(x)-1\right)_{-}\dd s\\=&L_{0,n}^{\mathrm{cl}}v_G(M)\int_{L\lambda^{n/2-d/2}}^{C} \Tr_{[0,\epsilon\sqrt{\lambda}]}^{D/N}\left( P_{s^{2/n}}+V(x)-1\right)_{-}\dd s.
      \end{split}
  \end{equation}
Using Agmon-type estimates, one can show that for every $s>0$, 
\begin{equation*}
    \lim_{\lambda\rightarrow\infty}\Tr_{[0,\epsilon\sqrt{\lambda}]}^{D/N}\left( P_{s^{2/n}}+V(x)-1\right)_{-}=\Tr_{[0,\infty)}\left( P_{s^{2/n}}+V(x)-1\right)_{-}.
\end{equation*}
  To insert this limit into \eqref{eqn:lem4rieint} we use the dominated convergence theorem. Start by observing 
  \begin{align*}
    \Tr^{D/N}_{[0,\varepsilon\sqrt{\lambda}]}(P_{s^{2/n}}+V(x)-1)_{-}&\leq \Tr^{N}_{[0,\varepsilon\sqrt{\lambda}]}(P_{s^{2/n}}-\norm{V}_\infty-1)_{-}\\
      &\leq  \Tr^{N}_{[0,\min \{\varepsilon\sqrt{\lambda},c s^{-2/n\beta}\}]}(-\partial_x^2-\norm{V}_\infty-1)_{-}
  \end{align*}
   for some $c>0$, where that we use that for all $x\ge c s^{-2/n\beta}$ the potential $C_\beta x^{-2}+s^{2/n}x^\beta-\norm{V}_\infty-1\geq 0$. Note that we can choose $L=L(\eps)>0$ large enough such that $\varepsilon\sqrt{\lambda}\ge c s^{-2/n\beta}$ for all $s\ge L \lambda^{n/2-d/2}$. We obtain for all $s\ge L \lambda^{n/2-d/2}$
  \begin{align*}
       \Tr^{D/N}_{[0,\varepsilon\sqrt{\lambda}]}(P_{s^{2/n}}+V(x)-1)_{-}\leq \Tr^{N}_{[0,c s^{-2/n\beta}]}(-\partial_x^2-\norm{V}_\infty-1)_{-}\lesssim (1 + s^{-2/(n\beta)}),
  \end{align*}
  which is integrable in $s$ from $0$ to $C$ since $\beta>2/n$, and thus acts as a dominating function for \eqref{eqn:lem4rieint}. 

\medskip

  We obtain
\begin{equation*}\label{eq:asymptoticsinterior}
       \lim_{\lambda\rightarrow\infty}\lambda^{-1-\frac{d}{2}}\Tr\left(-\Delta_{g_\varepsilon}\vert^{D/N}_{ X_\varepsilon}+V_\lambda-\lambda\right)_{-}=L_{0,n}^{\mathrm{cl}}v_G(M)\int_{0}^{\infty} \Tr_{[0,\infty)}\left( P_{s^{2/n}}+V(x)-1\right)_{-}\dd s,
  \end{equation*}
  so using this together with \eqref{eq:dnbracketing} and \eqref{eq:subleadingint}, we get
  \begin{equation*}
      \lim_{\lambda\rightarrow\infty}\lambda^{-1-\frac{d}{2}}\Tr\left(\Delta_X+V_\lambda-\lambda\right)_{-}=L_{0,n}^{\mathrm{cl}}v_G(M)\int_{0}^{\infty} \Tr_{[0,\infty)}\left( P_{s^{2/n}}+V(x)-1\right)_{-}\dd s.
  \end{equation*}
  Using the argument described at the beginning of the proof, we can deduce \eqref{eq:lecd} also for continuous functions $V(x,y)$ that depend on $y$. This completes the proof of \eqref{eq:lecd}.
  	\end{proof}

    \section{Proof of Theorem \ref{thm:main_grushin}}\label{sec:pfthm1}
	In this section we use Lemma \ref{lem:trace} to prove Theorem \ref{thm:main_grushin} by carrying through a weak-type convergence argument, see \cite{lieb_thomas-fermi_1977, evans_counting_1996}, using a rescaled Hellmann-Feynman argument. This type of argument was also used in \cite{read_asymptotic_2024}, in which the author studied the low--lying states in the Stark effect. 
    \begin{proof}[Proof of Theorem \ref{thm:main_grushin}]
        Take $\Gamma_\lambda\coloneqq (\Delta_X-\lambda)^{0}_{-}$ and $\widetilde{\rho}_\lambda$ as its density. Note that $\rho_\lambda=N(\lambda)^{-1}\widetilde{\rho}_\lambda$. We start with the lower bound, where 
    	\begin{align*}
    		\int_{X} \widetilde{\rho}_\lambda V_\lambda\dd v_g &=\Tr\left((\Delta_X+V_\lambda-\lambda)\Gamma_\lambda\right)-\Tr\left((\Delta_X-\lambda)\Gamma_\lambda\right)\\
    		&\geq \Tr(\Delta_X-\lambda)_{-}-\Tr(\Delta_X+V_\lambda-\lambda)_{-}
    	\end{align*}
	    follows from the variational principle, given that $0\leq \Gamma_\lambda\leq 1$. Thus, applying the asymptotics from Lemma \ref{lem:trace} 
    	\begin{align*}
    		\liminf_{\lambda\rightarrow\infty}\lambda^{-\frac{d}{2}-1}\int_X \widetilde{\rho}_\lambda V_{\lambda}\dd v_g &\geq 
    		L_{0,n}^{\mathrm{cl}}\int_{M}\int_{0}^\infty\Tr\left(P_{s^{2/n}}-1\right)_{-}-\Tr\left(P_{s^{2/n}}+V-1\right)_{-}\dd s\dd v_G.
    	\end{align*}

        \medskip
    	Substituting $V$ with $\epsilon V$, where $\varepsilon>0$, in the above shows that
    	\begin{equation}\label{eqn:densityepsilon}
    		\begin{split}&\liminf_{\lambda\rightarrow\infty}\lambda^{-\frac{d}{2}-1}\int_X \widetilde{\rho}_\lambda V_{\lambda}\dd v_g\\ &\geq
    		\varepsilon^{-1} L_{0,n}^{\mathrm{cl}}\int_M \int_{0}^\infty\Tr\left(P_{s^{2/n}}-1\right)_{-}-\Tr\left(P_{s^{2/n}}+\varepsilon V-1\right)_{-}\dd s\dd v_G.
        \end{split}
    	\end{equation}

      For ease of notation, we define 
    	\begin{align*}
    		\Lambda(s,y;V,\varepsilon)\coloneqq \varepsilon^{-1}\left[\Tr\left(P_{s^{2/n}}-1\right)_{-}-\Tr\left(P_{s^{2/n}}+\varepsilon V(x,y)-1\right)_{-}\right].
    	\end{align*}
    	Applying perturbation theory, see for example \cite[Chapter X.II]{reed_methods_1979}, for each $s$ and $q$ we have that 
    	\begin{align}\label{eqn:perturblim}
    		\lim_{\varepsilon\rightarrow 0}\Lambda(s,y;V,\varepsilon)=\sum_{k=1}^{n_s(1)}\int_{\R_+}\abs{\phi_k(x;s)}^2 V(x,q)\dd x
    	\end{align}
 where we define $n_s(\kappa)$ as the number of eigenvalues of $P_{s^{2/n}}$ less than $\kappa$, and $\phi_k$ denote the corresponding normalised eigenfunctions. 
        \medskip

     Next, we would like to take the limit $\varepsilon\rightarrow 0_+$ in \eqref{eqn:densityepsilon} using the dominated convergence theorem. To this end, let us find a corresponding dominating function.

        Choosing $\varepsilon>0$ small enough so that $\varepsilon<\norm{V}^{-1}_\infty$, then 
    	\begin{align*}
    		\abs{\Lambda(s,y;V,\varepsilon)}&\leq \abs{\Lambda(s,y;-\norm{V}_\infty,\varepsilon)}
    		= \varepsilon^{-1}\sum_{k=1}^{n_s(2)} \left(\lambda_k(P_{s^{2/n}})-\varepsilon\norm{V}_{\infty}-1\right)_{-}-\left(\lambda_k(P_{s^{2/n}})-1\right)_{-}
      \\ &\le n_s(2)\norm{V}_\infty.
    	\end{align*}
    	From the relationship \eqref{eqn:unitranP} and the result \cite[Proposition 4]{de_verdiere_weyl_2024} we know the asymptotics as $s\rightarrow 0$ are given by 
    	\begin{equation*}
    		n_s(\kappa)\lesssim \kappa^{1/2+1/\beta}s^{-2/n\beta}(1+o(1))
    	\end{equation*}
    	for any fixed $\kappa>0$. Furthermore, $n_s$ is non-increasing in $s$ and there exists a finite $S>0$ such that $n_s(\kappa)=0$ for all $s>S$. It follows that 
        \begin{align*}
            \int_0^\infty n_s(2)\dd s<\infty
        \end{align*}
        and thus by the dominated convergence theorem we can take the limit $\varepsilon\rightarrow 0_+$ in \eqref{eqn:densityepsilon} and use \eqref{eqn:perturblim} to find that  
    	\begin{align*}
    		\liminf_{\lambda\rightarrow\infty}\lambda^{-1-\frac{d}{2}}\int_{X}\widetilde{\rho}_\lambda V_\lambda\dd v_g&\geq L^{\mathrm{cl}}_{0,n}\int_M \int_{0}^\infty\Tr\left[\left(P_{s^{2/n}}-1\right)_{-}^0V\right]\dd s\dd v_G\\ 
    		&=L^{\mathrm{cl}}_{0,n}\int_M \int_{0}^\infty V(x,y)\int_{0}^\infty\sum_k^{n_s(1)}\abs{\phi_k(x;s)}^2\dd s\dd x\dd v_G .
    	\end{align*}

    	Taking $-V$ in place of $V$ in the argument above yields the same quantity as an upper bound
    	for the $\limsup$. Therefore, we have shown that for any bounded $V\in C([0,\infty)\times M)$ 
    	\begin{align}\label{eq:Bbeforedef}
\lim_{\lambda\rightarrow\infty}\lambda^{-1}\int_{X}\rho_\lambda V_\lambda\dd v_g
    				&=L^{\mathrm{cl}}_{0,n} C_{\alpha,n}^{-1}\int_M \int_{0}^\infty V(x,y)\left(\int_{0}^\infty\sum_k^{n_s(1)}\abs{\phi_k(x;s)}^2\dd s\right)\dd x\frac{\dd v_G}{v_G(M)} ,
    	\end{align}
        where $C_{\alpha,n}$ is the constant appearing in \eqref{eqn:VDdHTresult}.

        Now, we can choose
        \begin{equation}\label{eq:Bdef}
            B(x):=L^{\mathrm{cl}}_{0,n} C_{\alpha,n}^{-1}\int_{0}^\infty\sum_k^{n_s(1)}\abs{\phi_k(x;s)}^2\dd s.
        \end{equation}
It remains to show that $\int_0^\infty B=1$. To this end, note that taking $V\equiv 1$, 
        from \cite[Theorem 2]{de_verdiere_weyl_2024}, we have that
        \begin{align*}
        \lim_{\lambda\rightarrow\infty}\int_{[0,1)\times M}\rho_\lambda \dd v_g=\lim_{\lambda\rightarrow\infty}\int_{X}\rho_\lambda\dd v_g=1
        \end{align*}
        which completes the proof using \eqref{eq:Bbeforedef}.
	\end{proof}

\section{Application to the study of acoustic modes in gas giants}\label{sec:application}
In this section, we relate the result from Theorem \ref{thm:main_grushin} to the study of the propagation of sound waves in gas planets, compare with \cite{de_verdiere_weyl_2024}.
\medskip

The study of seismology for gas planets is crucial in revealing insights about their internal structure. For example, astrophysicists use ring seismology in Saturn to measure the eigenfrequencies of sound waves within the planet \cite{saturn,marley1991nonradial}. A particularity of gas planets is that the speed of sound goes to zero near the boundary of the planet. Physicists use a model for this behaviour that includes a parameter $\alpha$ depending on the chemical decomposition of the planet. On a mathematical level, we can study the eigenmodes of soundwaves in the gas planet by considering the eigenfunctions of the Laplace--Beltrami operator with a singular metric that depends on this parameter $\alpha$ \cite{de_hoop_geometric_2024}. 

\medskip

The study of this model has been initiated in \cite{de_hoop_geometric_2024} and then continued in \cite{de_verdiere_weyl_2024}. In \cite{de_verdiere_weyl_2024}, the authors proved Weyl asymptotics for all physically relevant values of $\alpha$ and they also showed that the majority of eigenfunctions concentrate at the boundary of the planet for critical and supercritical values of $\alpha$. As an application of Theorem \ref{thm:main_grushin}, we continue this work and show the rate at which the eigenfunctions concentrate near the boundary for supercritical values of $\alpha$.
 
 \medskip
 
	Again, let $X$ be a $n+1$ dimensional, smooth and compact manifold with boundary, which represents the gas planet. We consider on $X$ a singular Riemannian metric $g$, such that near $\partial X$,
 \begin{equation*}
     g = \bar{g}/u^\alpha ,
 \end{equation*}
 where $\bar{g}$ is a smooth non-degenerate Riemannian metric up to the boundary, $0<\alpha<2$ and $u$ is a transverse coordinate to the boundary. We can identify the part of $X$ near the boundary with $[0,1)\times M$, where $M$ is an $n$-dimensional Riemannian manifold and $\{0\}\times M$ is identified with $\partial X$. Choosing this transverse coordinate appropriately, we can write the Riemannian metric $g$ using the normal form \cite{de_hoop_geometric_2024, Graham}
	\begin{equation}\label{eqn:Xsingmetric}
		g=u^{-\alpha}(du^2+g_0(u)),
	\end{equation}
 where $g_0(u)$ is a family of smooth Riemannian metrics on $M$ depending continuously on the variable $u$. 

  \begin{figure}[h]
		\centering
		\begin{tikzpicture}[scale=3, every node/.style={font=\small}]
			\def\R{1}         
			\def\A{0}      
			\def\n{20.5}         
			\def\d{0.3}       
			
			\draw [thick, domain=0:360, samples=200, smooth, variable=\t]
			plot ({ (\R + \A * sin(\n * \t)) * cos(\t) }, { (\R + \A * sin(\n * \t)) * sin(\t) });
			
			\node at (0, 0) {$X$};
			
			\draw [thick, dashed, domain=0:360, samples=200, smooth, variable=\t]
			plot ({ (\R - \d + \A * sin(\n * \t)) * cos(\t) }, { (\R - \d + \A * sin(\n * \t)) * sin(\t) });
			
			\fill [gray!15, even odd rule]
			plot [domain=0:360, samples=200, smooth, variable=\t]
			({ (\R + \A * sin(\n * \t)) * cos(\t) }, { (\R + \A * sin(\n * \t)) * sin(\t) })
			--
			plot [domain=360:0, samples=200, smooth, variable=\t]
			({ (\R - \d + \A * sin(\n * \t)) * cos(\t) }, { (\R - \d + \A * sin(\n * \t)) * sin(\t) })
			-- cycle;
			
			\node at ({ (\R + \A) * cos(45)+ 0.1}, { (\R + \A) * sin(45) + 0.1 }) {$\partial X$};
			
			\draw [->] (-1,0) -- (-0.7,0) node[midway, above] {$u$};
			
			\node at (0,-0.85) {$[0,1)\times M$};
		\end{tikzpicture}
		\caption{Illustrative $X$, grey area represents $[0,1)\times M$}
		\label{fig:enter-label}
	\end{figure}
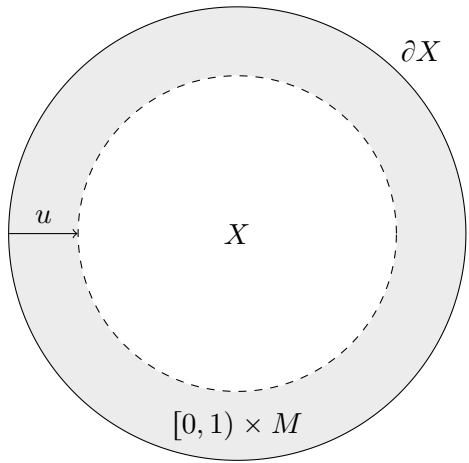

 \medskip

   We start by performing a change of variables on $[0,1)\times M$. Taking $x=(1-\frac{\alpha}{2})^{-1}u^{1-\frac{\alpha}{2}}$, we can write \eqref{eqn:Xsingmetric} as
	\begin{align*}
		g=dx^2+x^{-\beta}g_1(x)\text{ with }\beta=\frac{2\alpha}{2-\alpha},
	\end{align*}
	for some family of smooth Riemannian metrics $g_1 (x)$ on $M$ that depend continuously on $x$. This recovers the Grushin model considered in this paper, compare with \eqref{eq:Grushin_metric}.

\medskip

Using Theorem \ref{thm:main_grushin}, we can deduce that the acoustic modes on gas planets accumulate at a rate of $\lambda^{-1/(2-\alpha)}$. 
    \begin{theorem}\label{thm:main}
		Let $\rho_\lambda$ be the density of eigenfunctions corresponding to eigenvalues less than $\lambda$ of the Laplace--Beltrami operator corresponding to the metric \eqref{eqn:Xsingmetric}, compare with \eqref{eqn:spectralfnc}, and let $\alpha\in (2/(n+1),2)$. Then there exists a function $A\in L^1(0,\infty)$ depending on $\alpha$ with $\norm{A}_1=1$ that is defined in \eqref{eq:Bdef} and \eqref{eq:Adef} below
  such that
		\begin{align*}
			\lim_{\lambda\rightarrow\infty} \int_{[0,1)\times M}\rho_\lambda(u,y) f(\lambda^{1/(2-\alpha)}u,y)\dd v_{g}=\int_{[0,\infty)\times M} A (u)f(u,y)\dd u\frac{\dd v_G(y)}{v_G(M)}
		\end{align*}
		holds for any bounded $f\in C([0,\infty)\times M)$.
	\end{theorem}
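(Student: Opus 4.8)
The plan is to obtain Theorem \ref{thm:main} directly from Theorem \ref{thm:main_grushin} via the change of variables $x=(1-\alpha/2)^{-1}u^{1-\alpha/2}$ recorded above, which turns the metric \eqref{eqn:Xsingmetric} into the Grushin metric $dx^2+x^{-\beta}g_1(x)$ with $\beta=2\alpha/(2-\alpha)$, together with a judicious choice of test function. First I would record that the hypothesis $\alpha\in(2/(n+1),2)$ translates exactly into $\beta>2/n$ (indeed $\beta>2/n\iff\alpha n>2-\alpha\iff\alpha(n+1)>2$), so that Theorem \ref{thm:main_grushin} applies in this range. Note also that the coordinate change is an isometry of $(X,g)$ between the $u$- and $x$-descriptions, so the Laplace--Beltrami operator, its eigenfunctions, and the density $\rho_\lambda$ are literally the same object in both pictures.

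Writing $\gamma:=1-\alpha/2\in(0,1)$, so that $2/(2-\alpha)=1/\gamma$ and $\lambda^{1/(2-\alpha)}=\lambda^{1/(2\gamma)}$, the key elementary identity is
\begin{equation*}
\lambda^{1/2}=\big(\lambda^{1/(2-\alpha)}\big)^{\gamma},
\qquad\text{hence}\qquad
\sqrt{\lambda}\,x=\gamma^{-1}\big(\lambda^{1/(2-\alpha)}u\big)^{\gamma}\quad\text{when }x=\gamma^{-1}u^{\gamma}.
\end{equation*}
Given a bounded $f\in C([0,\infty)\times M)$, I would apply Theorem \ref{thm:main_grushin} to the bounded continuous function $V(x,y):=f\big(((1-\alpha/2)x)^{2/(2-\alpha)},y\big)$, for which a short computation using $(1-\alpha/2)\gamma^{-1}=1$ and $\gamma\cdot 2/(2-\alpha)=1$ gives $V(\sqrt{\lambda}\,x,y)=f(\lambda^{1/(2-\alpha)}u,y)$. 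Since $\rho_\lambda$ is an intrinsically defined function on $X$ and $\dd v_g$ is coordinate independent, rewriting the left-hand side of Theorem \ref{thm:main_grushin} in the $u$-coordinate produces exactly the left-hand side of Theorem \ref{thm:main}, up to the domain discrepancy discussed below. On the right-hand side I would substitute $x=(1-\alpha/2)^{-1}u^{1-\alpha/2}$, under which $\dd x=u^{-\alpha/2}\,\dd u$ and $[0,\infty)_x\to[0,\infty)_u$, and then set
\begin{equation}\label{eq:Adef}
A(u):=u^{-\alpha/2}\,\PR\big((1-\tfrac{\alpha}{2})^{-1}u^{1-\alpha/2}\big),
\end{equation}
so that $\int_0^\infty\PR(x)V(x,y)\,\dd x=\int_0^\infty A(u)f(u,y)\,\dd u$. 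Applying the same substitution to $\norm{\PR}_1=1$ gives $\norm{A}_{L^1(0,\infty)}=\int_0^\infty A(u)\,\dd u=\int_0^\infty\PR(x)\,\dd x=1$ (here $A\ge0$ since $\PR\ge0$), so $A\in L^1(0,\infty)$ with $\norm{A}_1=1$ as required.

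The only point needing care is that Theorem \ref{thm:main_grushin} integrates over $x\in[0,1)$, which corresponds to $u\in[0,(1-\alpha/2)^{2/(2-\alpha)})$ rather than $u\in[0,1)$, while Theorem \ref{thm:main} wants $u\in[0,1)$. The region on which the two integrals disagree is a collar bounded away from $\partial X$, and on it the mass of $\rho_\lambda$ vanishes in the limit: combining the localisation statement following Theorem \ref{thm:main_grushin} with $\int_X\rho_\lambda\,\dd v_g=1$ yields $\int_{X\setminus([0,\delta)_u\times M)}\rho_\lambda\,\dd v_g\to0$ for every $\delta>0$, so since $f$ is bounded the difference between the two integrals is at most $\norm{f}_\infty\int_{\{u\ge\delta\}}\rho_\lambda\,\dd v_g\to0$. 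This is a routine estimate and I do not anticipate a genuine obstacle; the proof is in essence a bookkeeping of exponents in the change of variables, with Theorem \ref{thm:main_grushin} and the concentration property established there as the only substantive inputs.
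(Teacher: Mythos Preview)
Your proof is correct and follows essentially the same approach as the paper: both reduce Theorem~\ref{thm:main} to Theorem~\ref{thm:main_grushin} via the coordinate change $x=(1-\alpha/2)^{-1}u^{1-\alpha/2}$, verify the test-function correspondence $V(\sqrt{\lambda}\,x,y)=f(\lambda^{1/(2-\alpha)}u,y)$, and define $A$ by \eqref{eq:Adef}. Your handling of the domain discrepancy (the collar between $u=(1-\alpha/2)^{2/(2-\alpha)}$ and $u=1$) via the concentration property is in fact more explicit than the paper's, which passes over this point silently.
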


    \begin{proof}[Proof of Theorem \ref{thm:main}]
	We obtain the form of the result in Theorem \ref{thm:main} by changing back into the true transversal coordinates $u$. 
 Define for any $v\in [0,\infty)$, $y\in M$
 \begin{equation}\label{eq:fV}
     f(v,y):=V\left(\left(1-\frac{\alpha}{2}\right)^{-1}v^{1-\frac{\alpha}{2}},y\right).
 \end{equation}
 Note that using $x=(1-\alpha/2)^{-1}u^{1-\frac{\alpha}{2}}$, we get 
	\begin{align*}
		V_\lambda(x,y)=\lambda V\left(\left(1-\frac{\alpha}{2}\right)^{-1}(u\lambda^{1/(2-\alpha)})^{1-\frac{\alpha}{2}},y\right)= \lambda f(u\lambda^{1/(2-\alpha)},y).
	\end{align*}
	Taking a change of variables and translating this to the volume form we see that 
	\begin{equation*}
	  \lim_{\lambda\rightarrow\infty}\int_{X_1}\rho_\lambda(u,y)f(u\lambda^{1/(2-\alpha)},y)\dd v_g=   \lim_{\lambda\rightarrow\infty}\lambda^{-1}\int_{X}\rho_\lambda(u,y)V_\lambda\dd v_g
	\end{equation*}
	Using Theorem \ref{thm:main_grushin}, we obtain
 \begin{equation*}
       \lim_{\lambda\rightarrow\infty}\int_{X_1}\rho_\lambda(u,y)f(u\lambda^{1/(2-\alpha)},q)\dd v_g= \int_{(0,\infty)\times M} V(x,y)B(x)\dd x\frac{\dd v_{G}(y)}{v_G(M)}.
 \end{equation*}
Now defining 
\begin{equation}\label{eq:Adef}
    A(u)= B((1-\alpha/2)^{-1}u^{1-\alpha/2})u^{-\alpha/2},
\end{equation}
where $B$ is given \eqref{eq:Bdef}, we have that
\begin{equation*}
    \lim_{\lambda\rightarrow\infty}\int_{X_1}\rho_\lambda(u,y)f(u\lambda^{1/(2-\alpha)},q)\dd v_g= \int_{(0,\infty)\times M} f(u,y)A(u)\dd u\frac{\dd v_{G}(y)}{v_G(M)}. \qedhere
\end{equation*}
 \end{proof}

   Another way of writing the statement of Theorem \ref{thm:main} is to move the scaling onto the spectral function.    \begin{corollary}\label{rem:rescale}
	Let $\rho_\lambda$ be as in Theorem \ref{thm:main}, then 
  \begin{equation}\label{eq:distrrem}
			\lim_{\lambda\rightarrow\infty}\lambda^{-\frac{1}{2}+\frac{\alpha d}{4}}\int_{[0,\infty)\times M}\rho_\lambda(\lambda^{-1/(2-\alpha)}u,y)f(u,y)\dd v_{\widetilde{g}}=\int_{[0,\infty)\times M} \PR (u)f(u,y)\dd u\frac{ \dd v_G}{v_G(M)}.
		\end{equation}
        for any $f\in C_c([0,\infty)\times M)$, where $\tilde g=u^{-\alpha}(\dd u^2+g_0(0))$.
 \end{corollary}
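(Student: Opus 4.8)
The plan is to obtain Corollary~\ref{rem:rescale} directly from Theorem~\ref{thm:main} by transferring the dilation from the test function onto $\rho_\lambda$ through the substitution $u\mapsto\lambda^{-1/(2-\alpha)}u$, keeping careful track of how the Riemannian volume form changes. In the transverse coordinates $(u,y)$ near $\partial X$ associated with the normal form \eqref{eqn:Xsingmetric} one has
\[
\dd v_g = u^{-\alpha(n+1)/2}\,r(u)\,\dd u\,\dd v_G,\qquad \dd v_{\widetilde g}= u^{-\alpha(n+1)/2}\,\dd u\,\dd v_G,\qquad r(u)\coloneqq \sqrt{\tfrac{\det g_0(u)}{\det g_0(0)}},
\]
where $r$ is continuous with $r(0)=1$. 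Substituting $u=\lambda^{-1/(2-\alpha)}w$ in the integral on the left of Theorem~\ref{thm:main}, the weight $u^{-\alpha(n+1)/2}$ contributes $\lambda^{\alpha(n+1)/(2(2-\alpha))}$, the differential $\dd u$ contributes $\lambda^{-1/(2-\alpha)}$, and since $d=2n/(2-\alpha)$ these combine to exactly $\lambda^{-1/2+\alpha d/4}$; moreover $f(\lambda^{1/(2-\alpha)}u,y)=f(w,y)$. Thus, for all $\lambda$ large enough that the integrand on the left is supported in $[0,1)\times M$ after the substitution,
\begin{equation}\label{eq:cor-rewrite}
\int_{X_1}\rho_\lambda(u,y)\,f(\lambda^{1/(2-\alpha)}u,y)\,\dd v_g=\lambda^{-\frac12+\frac{\alpha d}{4}}\int_{[0,\infty)\times M}\rho_\lambda(\lambda^{-1/(2-\alpha)}w,y)\,f(w,y)\,r(\lambda^{-1/(2-\alpha)}w)\,\dd v_{\widetilde g}.
\end{equation}

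It then remains to discard the factor $r(\lambda^{-1/(2-\alpha)}w)$. Fix $f\in C_c([0,\infty)\times M)$ and $\delta>0$; since $f$ has compact support and $r$ is continuous with $r(0)=1$, there is $\lambda_\delta$ such that $r(\lambda^{-1/(2-\alpha)}w)\in[\tfrac12,1+\delta]$ for all $(w,y)\in\supp\chi$ and $\lambda\ge\lambda_\delta$, where $\chi\in C_c([0,\infty)\times M)$ is any fixed nonnegative majorant of $|f|$. Applying \eqref{eq:cor-rewrite} to $\chi$ together with Theorem~\ref{thm:main} and $\rho_\lambda\ge0$ yields the tightness bound
\[
\limsup_{\lambda\to\infty}\lambda^{-\frac12+\frac{\alpha d}{4}}\int\rho_\lambda(\lambda^{-1/(2-\alpha)}w,y)\,|f(w,y)|\,\dd v_{\widetilde g}\le 2\limsup_{\lambda\to\infty}\lambda^{-\frac12+\frac{\alpha d}{4}}\int\rho_\lambda(\lambda^{-1/(2-\alpha)}w,y)\,\chi\,r\,\dd v_{\widetilde g}<\infty.
\]
Hence the difference between the right-hand side of \eqref{eq:cor-rewrite} and its version with $r$ replaced by $1$ is $O(\delta)$ uniformly for large $\lambda$; letting $\lambda\to\infty$ and then $\delta\to0$, and using Theorem~\ref{thm:main} to evaluate the limit of the left-hand side of \eqref{eq:cor-rewrite}, gives \eqref{eq:distrrem} with the profile $\PR$ supplied by Theorem~\ref{thm:main} through \eqref{eq:Bdef}--\eqref{eq:Adef}.

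The only genuinely non-routine point is the passage from $\dd v_g$ to $\dd v_{\widetilde g}$, that is, justifying that the transverse metric family $g_0(u)$ may be frozen at $u=0$ in the limit. This is exactly where the assumption $f\in C_c$ enters: it localises the relevant values $u=\lambda^{-1/(2-\alpha)}w$ to an interval collapsing onto $\{0\}$, on which $r\to1$ uniformly, and it is what forces the tightness estimate above, obtained by inserting a compactly supported majorant of $|f|$ into Theorem~\ref{thm:main}. The verification that the dilation Jacobian acting on $\dd v_{\widetilde g}$ produces precisely the power $-1/2+\alpha d/4$, and the identification of the limiting profile via \eqref{eq:Adef}, are purely computational.
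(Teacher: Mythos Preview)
Your argument is correct and follows essentially the same route as the paper: perform the substitution $u\mapsto\lambda^{-1/(2-\alpha)}u$ to transfer the dilation from $f$ onto $\rho_\lambda$, identify the resulting power of $\lambda$ as $-\tfrac12+\tfrac{\alpha d}{4}$, and then use the compact support of $f$ together with the continuity of $g_0(u)$ at $u=0$ to replace the rescaled transverse metric by $g_0(0)$. The paper's proof is more telegraphic (it simply writes $g_\lambda=u^{-\alpha}(\dd u^2+g_0(u\lambda^{-1/(2-\alpha)}))$ and says one may replace $v_{g_\lambda}$ by $v_{\widetilde g}$), whereas you spell out the Jacobian computation and supply a tightness bound via a compactly supported majorant $\chi\ge|f|$ to justify dropping the factor $r(\lambda^{-1/(2-\alpha)}w)$; this is the correct way to make that step rigorous.
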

 \begin{proof}
    Using the change of function in \eqref{eq:fV} and moving this onto the spectral function we see that 
	\begin{equation*}
        \begin{split}
	    	 &\lim_{\lambda\rightarrow\infty}\int_{X_1}\rho_\lambda(u,q)f(u\lambda^{1/(2-\alpha)},y)\dd v_g(y)\\
		 =	&\lim_{\lambda\rightarrow\infty}\lambda^{-\frac{1}{2}+\frac{d\alpha}{4}}\int_{[0,\lambda^{-1/(2-\alpha)}]\times M}\rho_\lambda(\lambda^{-1/(2-\alpha)}u,q)f(u,y)\dd v_{g_\lambda}(y),
        \end{split}
	\end{equation*}
	where $g_\lambda=u^{-\alpha}(\dd u^2+g_0(u\lambda^{-1/(2-\alpha)}))$. Then for continuous $f$ that are compactly supported in $u$, given that $g_0(u)$ is continuous in $u$, we can replace the Riemannian measure $ v_{g_\lambda}$ by $ v_{\tilde g}$, where $\tilde g=u^{-\alpha}(\dd u^2+g_0(0))$ and obtain \eqref{eq:distrrem}.
 \end{proof}

	\subsection*{Acknowledgements} 
	C.D.~expresses her deepest gratitude to Phan Thành Nam and
Laure Saint-Raymond for their continued support. She is grateful to Yves Colin de Verdière and Emmanuel Trélat for introducing her to the Grushin model and for very helpful discussions and advice.
Parts of this work were done while C.D.~stayed at Institut des Hautes Etudes Scientifiques and she would like to thank the institute for the hospitality. She acknowledges the support by the European
Research Council via ERC CoG RAMBAS, Project No. 101044249.
	L.R.~was funded by the Deutsche Forschungsgemeinschaft (DFG) project TRR 352 – Project-ID 470903074.

\end{document}